\def\disp{\displaystyle}
\def\crr{\cr\noalign{\vskip2mm}}
\def\dref#1{(\ref{#1})}
\theoremstyle{plain}
\newtheorem{theorem}{Theorem}[section]
\newtheorem{lemma}{Lemma}[section]
\numberwithin{equation}{section}
\theoremstyle{definition}
\newtheorem{remark}{Remark}[section]
\begin{document}
\title{{\bf A unique continuation  property for a class of parabolic differential inequalities in a bounded domain\footnote{\small
  This work
was partially supported by  the Natural Science
Foundation of Henan Province (No. 162300410176).
 Corresponding Author: Taige Wang, Email: wang2te@ucmail.uc.edu.  }} }

\author{Guojie Zheng$^{a}$, \;Dihong Xu$^{b}$,   \;Taige Wang$^{c}$
\\
$^a${\it College of Mathematics and Information Science}\\
{\it Henan Normal University, Xinxiang, 453007, P.R. China}\\
$^b${\it College of Engineering}\\
{\it Huazhong Agricultural Univetsity, Wuhan, 430070, P.R. China}\\
$^c${\it Department of Mathematical Sciences}\\
{\it  University of Cincinnati, Cincinnati, OH 45221, USA.}}

\date{}

 \maketitle

\begin{abstract}
This article is concerned with the unique continuation property of a forward in-time differential inequality abstracted from parabolic equations proposed on a convex domain $\Omega$ prescribed with homogeneous Dirichlet boundary conditions. Our result shows that the value of the solutions can be determined uniquely by its value on an arbitrary open subset $\omega$ in $\Omega$ at any given positive time $T$.
 We also derive the quantitative nature of this unique continuation, that is, the estimate of a Sobolev norm of the initial data on $\Omega$, which is majorized by that of solution on the bounded open subset $\omega$ at terminal moment $t=T$. \\

\vspace{0.3cm}

\vspace{0.3cm}

\noindent {\bf Keywords:}~  Unique continuation; frequency function; differential inquequality

\vspace{0.3cm}

\noindent {\bf AMS subject classifications:}~  35K05, 93D15.
\end{abstract}

\section{Introduction}\label{sec1}
\ \ \ \  Suppose that $\Omega$ is a convex bounded region in $\mathbb{R}^n\,\,(n\ge 1)$ with a smooth boundary $\partial\Omega$.
Let $T$ be a given positive constant. We consider a forward differential inequality in time with 1st order derivative term, which reads:
\begin{equation}\label{1.1}
|\partial_t u(x,t)-\triangle u(x,t)|\leq M(|\nabla u(x,t)|+|u(x,t)|),
\end{equation}
in  $(x,t)\in Q=\Omega\times(0,T]$, where $M$ is a positive number. In this paper, we will discus the unique continuation  property for solution of (\ref{1.1}) under suitable regularity assumption on $u$.\\

 Unique continuation property of solutions is an interesting topic related to inverse problems and observability of control theories of PDEs, and it was first found to hold for elliptic equations as it naturally holds for harmonic functions, then for some classes of parabolic equations. The first result about unique continuation of strong solutions of a parabolic equation with constant coefficients is in \cite{Landis}, where E. Landis and O. Oleinik used reduction, from study of parabolic equations extended to elliptic ones, for the original parabolic equations with time-invariable coefficients. To prove unique continuation, two methodologies, \emph{Carleman inequalities} and \emph{frequency functions}, are involved; the former one is widely used as an effective tool to obtain estimates of frequency functions, when tackling time-variable coefficients; while property of the latter one can be reached via less complicated calculation than Carleman's, only if constant coefficient cases are discussed. The interplay of Carleman estimates and frequency functions are described clearly in L. Escauriaza et al \cite{EscauriazaF1}. Similar results for heat operators and parabolic-type equations via Carleman's can be found in C. Escauriaza, F. Fern\'{a}ndez, C. Kenig, G. Seregin, V. \u{S}ver\'{a}k, D. Tataru, L. Vega's works \cite{C. Escauriaza, Sverak, EscauriazaF2, Fernandez,  C. Kenig,  H. Koch} while frequency function method can be found in works by F. Almgren, N. Garofalo, F. Lin, C. Poon, K. Phung, G. Wang et al (see, e.g., \cite{Almgren, Lin2, F. Lin, Q. Lu, Poon, Phung0, Phung1, Phung2, Phung3}). \\
 
Backward-in-time inequalities derived from abstract parabolic equations were considered in $Q_{R, T} = \mathbb{R}^n \textbackslash  B_R\times [0, T]$ by L. Escauriaza, G. Seregin, and V. \u{S}ver\'{a}k (see, e.g., \cite{Sverak, Seregin, ESS}), which are insightful for regularity results of solutions of incompressible Navier-Stokes equations posed in $\mathbb{R}^3$. Therein, backward operator $\partial_t + \triangle$ is in inequalities and backward uniqueness is considered, which is usually referred to a weaker result than unique continuation, as property on overall domain controlled by that of a real open subset is absent. In \cite{Seregin}, $M=0$, inequality is not null-controllable by any boundary control applied on boundary of ball $\partial B_R$. In \cite{Sverak}, authors proposed a growth condition, $|u(x,t)|\le Me^{M|x|^2}$ for some positive $M$ and any $t>0$, which can't be weakened to obtain the backwardness, then the regularity of solution is obtained. In these works which are carried out by using Carleman inequalities, they also found, null controllability doesn't exist for any bounded controls $b(x,t)$ and $c(x,t)$ in backward equation 

$$\partial_tu + \triangle u + b\cdot\nabla u + cu=0.$$

Moreover, when $n=3$, weak solution $v(x,t)$ of incompressible N-S equation is smooth under assumption $v(x,t)\in L^{\infty}(0,T;L^3(\mathbb{R}^3))$. In \cite{ESS}, unique continuation and backward uniqueness are both pursued on $B_R\times(0,T)$ with $n=3$ and $\mathbb{R}_+^{n}\times(0,T)$ for any dimension $n$, respectively. Note, in \cite{ESS}, same growth condition in \cite{Sverak} is used as well for proving backward uniqueness. \\

Recently, G. Camliyurt and I. Kukavica in \cite{Camliyurt} proved the unique continuation by abtaining finite order of vanishing for forward parabolic PDE with 1st derivative terms, whose coefficients are variable and bounded. Frequency functions and a technique of changing variables are invoked in their work. This situation would be a case of differential inequality discussed in this paper. We refer readers to similar discussion on elliptic equations by H. Donnelly and C. Fefferman \cite{DonnellyF} and I. Kukavica \cite{Kukavica} for motivations in early years.\\ 

Inequalities with less smooth variable coefficients can be considered in plenty of seminal literatures, including that for dispersive PDEs such as linear Schr\"{o}dinger equations. Among them, $|\partial_t u+\triangle u|\le |V(x,t)u|$ in $\Omega\times[0,T]$ is discussed in C. Sogge's \cite{Sogge} and C. Escauriaza and F. Fern\'{a}ndez \cite{EscauriazaF2}. In \cite{Sogge}, unbounded potential $V(x,t)\in L_{loc}^{n+2\over2}(dxdt)$ is assumed, and the author obtained weak unique continuation. Strong uniqueness continuation was obtained by D. Jerison and C. Kenig for Schr\"{o}dinger operators with $V\in L_{loc}^{n\over 3}(dxdt)$ (see, e.g., \cite{Jerison}), which is proved to be sharp. In \cite{EscauriazaF2}, $V$ is bounded hence the inequalities are similar to ours but backward. We would also refer readers to L. Escauriaza and L. Vega's \cite{C. Escauriaza, EscauriazaVega} for results about heat operators with other different conditions restricted on potential $V$ and references therein. \\

Most studies of unique continuation / backward uniqueness of differential inequalities are carried out owing to Carleman inequalities, while this paper will follow the clue provided by frequency function to pursue the strong unique continuation.\\

To facilitate our discussion,  we make the following conditions for throughout the paper: 

\vspace{0.3cm}
\noindent {\bf Assumption 1.} The regularity for solution of (\ref{1.1}) is
 $$u(x,t)\in L^2(0,T; H_0^1(\Omega))\cap H^1(0,T; H^{-1}(\Omega)).$$

\noindent {\bf Assumption 2.} The growth condition for $u$ is
\begin{eqnarray*}
\int_{\Omega }|u(x, T)|^2dx\le e^{CM(T-t)}\int_{\Omega} |u(x,t)|^2dx,
\end{eqnarray*}
for any $t\in[0,T]$.\\

\noindent {\bf Assumption 3.} Suppose that $\partial_t u(x,t)-\triangle u(x,t)\in L^2(\Omega)$, and
  \begin{eqnarray*}
\|\partial_t u(x,t)-\triangle u(x,t)\|_{H^{-1}(\Omega)} \le CM\|u(x,t)\|_{L^2(\Omega)},
\end{eqnarray*}
 for $t\in(0,T]$ a.e..\\
\vspace{0.3cm}

Throughout the rest of the paper, the following notation will be used. We denote $\|\cdot\|_X$ to the norm of a Banach space $X$, and
$\langle\cdot,\cdot\rangle$, to the inner product of $L^2(\Omega)$ respectively.
 Besides, variables $x$ and $t$ for functions of $(x, t)$ and variable
$x$ for functions of $x$ will be omitted, provided that it does cause some confusion.
Let $\omega\subset\Omega$ be an nonempty and open subset of $\Omega$. The unique continuation property obtained are stated as follows:
\begin{theorem}\label{theorem1.1}Suppose that Assumptions 1, 2 hold. Then,
 there are positive numbers: $\gamma= C(\Omega, \omega, T)$ and $C=C(\Omega,\omega)$  such that, any solution $u$ of equation (\ref{1.1}) has the following estimate:
\begin{eqnarray}\label{1.4}
\int_{\Omega}|u(x,T)|^2dx&\leq& C\exp\left(\frac{C}{T}+C(MT+M^2T^2)\right)\crr\disp
&&\times\left(\int_{\Omega}|u_0(x)|^2dx\right)^{1-\gamma}\times\left(\int_{\omega}|u(x,T)|^2dx\right)^\gamma.
\end{eqnarray}
\end{theorem}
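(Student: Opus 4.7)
The plan is to follow the frequency function method of Poon and Phung--Wang, localized around a point $x_0\in\omega$. For a small parameter $h>0$ (to be chosen of order $T$ at the end), set
\[
G_h(x,t) = (T-t+h)^{-n/2}\exp\!\Big({-\frac{|x-x_0|^2}{4(T-t+h)}}\Big),
\]
\[
H(t)=\int_\Omega u^2 G_h\,dx, \qquad D(t)=\int_\Omega |\nabla u|^2 G_h\,dx, \qquad N(t)=(T-t+h)\,\frac{D(t)}{H(t)}.
\]
The aim is to show that $N$ is essentially monotone in $t$ modulo errors controlled by $M$, which forces log-convexity of $H$ on $[0,T]$ and hence an interpolation between $H(0)$ (the initial mass on $\Omega$) and $H(T)$, which concentrates on a small ball around $x_0$ and can be arranged to sit inside $\omega$ for $h$ small.

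Write $f=\partial_t u-\triangle u$. By \eqref{1.1}, $|f|\le M(|\nabla u|+|u|)$ pointwise and, by Assumption 3, $\|f\|_{H^{-1}}\le CM\|u\|_{L^2}$. For $f\equiv 0$, the Poon identities yield $(\log H)'=-2N/(T-t+h)$ and $N'\le 0$ up to boundary contributions; the convexity of $\Omega$ together with the homogeneous Dirichlet data forces the boundary integrals arising in the integration by parts against $\nabla G_h$ to have the correct sign (Rellich--Ne\v{c}as identity), so they are discarded. The perturbation by $f$ contributes terms of the form $\int fu\,G_h$ and $\int f\,(x-x_0)\cdot\nabla u\,G_h$ to the identities for $H'$ and $D'$. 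The zeroth-order piece $M|u|$ is routine; the delicate piece is the first-order term $M|\nabla u|$, which a priori produces an error of size $M\sqrt{D/H}$ in the identity for $N'$, only linear in $\sqrt N$. It is handled by Young's inequality, splitting the error into an $\varepsilon D/H$ term reabsorbed into the good diffusion contribution and an $\varepsilon^{-1}M^2$ remainder fed into Gronwall. The outcome is $N(t)\le e^{C(MT+M^2T^2)}(N(0)+1)$ on $[0,T]$, and integration of $(\log H)'$ over $[0,T]$ yields a log-convexity inequality
\[
H(T)\le \exp\!\Big(\tfrac{C}{T}+C(MT+M^2T^2)\Big)\,H(0)^{1-\mu}\,H_{\mathrm{loc}}(T)^{\mu},
\]
with $H_{\mathrm{loc}}(T)$ the weighted mass of $u(\cdot,T)$ on a ball of radius of order $\sqrt{h}$ around $x_0$.

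To conclude, I would strip the Gaussian weights, globalize, and invoke Assumption 2. Since $\Omega$ is bounded, choosing $h\sim T$ gives a two-sided bound on $G_h$ over $\Omega$ of the form $G_h\sim T^{-n/2}e^{-C\,\mathrm{diam}(\Omega)^2/T}$, so weighted and ordinary $L^2$ norms differ only by a factor that is absorbed into the $\exp(C/T)$ prefactor; the small ball supporting $H_{\mathrm{loc}}(T)$ can be chosen inside $\omega$. When $\omega$ is an arbitrary open subset, a finite chain of overlapping balls inside $\Omega$ propagates the smallness from an interior ball of $\omega$ to all of $\Omega$; the length of the chain depends only on $\Omega$ and $\omega$ and its effect is absorbed into the exponent $\gamma=C(\Omega,\omega,T)$. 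Assumption 2 is then applied to replace any intermediate-time $L^2$ mass of $u$ by $\|u_0\|_{L^2(\Omega)}$, giving \eqref{1.4}. The main obstacle in the entire plan is the first-order term $M|\nabla u|$ on the right side of \eqref{1.1}: absorbing it cleanly in the frequency identity is what produces the $M^2T^2$ factor in the final exponential, and it relies essentially on the convexity of $\Omega$ to eliminate unwanted boundary contributions throughout.
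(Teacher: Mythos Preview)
Your setup---the backward Gaussian weight $G_h$, the frequency $N$, the use of convexity of $\Omega$ to discard the boundary terms, and the absorption of the $M|\nabla u|$ error via Young's inequality---matches the paper's Lemmas 2.1--2.3. The gap is in the passage from monotonicity of $N$ to the interpolation inequality. You claim that ``integration of $(\log H)'$ over $[0,T]$ yields a log-convexity inequality $H(T)\le \exp(\cdots)H(0)^{1-\mu}H_{\mathrm{loc}}(T)^{\mu}$'', but log-convexity does not give this: it controls $H$ at an \emph{intermediate} time by the endpoint values, not $H(T)$ by a product of $H(0)$ and a localized quantity at the same time $T$. Moreover, the choice $h\sim T$ does not force the Gaussian at $t=T$ to concentrate inside $\omega$; when $T$ is not small, a ball of radius $\sqrt h$ need not sit in $\omega$.

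What the paper does instead is the following. Integrating the monotonicity estimate over $(t,T)$ and then over $t\in(0,T/2)$ bounds $\lambda e^{-M^2T}N_\lambda(T)$ by $(1+\lambda/T)\int_0^{T/2}N_\lambda$ plus an $O(M^2T^2)$ error. Since $(\log H_\lambda)'=-N_\lambda+\text{error}$, that integral equals $\log\big(H_\lambda(0)/H_\lambda(T/2)\big)$ up to controllable terms, and after stripping the weight and using Assumption~2 one obtains $\lambda e^{-M^2T}N_\lambda(T)+n/2\le(1+\lambda/T)K_T$, where $K_T$ contains $\log\big(\|u_0\|_{L^2(\Omega)}^2/\|u(T)\|_{L^2(\Omega)}^2\big)$. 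Next, the elementary weighted inequality
\[
\int_\Omega|x-x_0|^2|u(T)|^2e^{-|x-x_0|^2/4\lambda}\,dx\le 8\lambda\Big(\lambda N_\lambda(T)+\tfrac{n}{2}\Big)\int_\Omega|u(T)|^2e^{-|x-x_0|^2/4\lambda}\,dx
\]
is used to split $\Omega$ into $B_r\subset\omega$ and $\Omega\setminus B_r$. The parameter $\lambda$ is then chosen as the root of a quadratic so that the coefficient in front of the $\Omega\setminus B_r$ piece is exactly $1/2$; this $\lambda$ depends on the solution through $K_T$ and is \emph{not} of order $T$. After absorption one gets $\int_\Omega|u(T)|^2\le 2e^{m/4\lambda}\int_{B_r}|u(T)|^2$, and the explicit formula for $\lambda$ unpacks $e^{m/4\lambda}$ into $e^{C/T+C(MT+M^2T^2)}\big(\|u_0\|^2/\|u(T)\|^2\big)^{\alpha}$, which rearranges to \eqref{1.4}. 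No chain-of-balls propagation is needed, and Assumption~3 is not invoked (you cite it, but the theorem assumes only Assumptions~1 and~2).
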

\begin{remark}
$(i)$ The constant $C$ in (\ref{1.4}) or (\ref{1.5}) stands for a positive
constant only dependent on domains $\Omega$ and  $\omega$. This constant varies in different contexts.\\

\noindent $(ii)$ This result demonstrates that  solutions of (\ref{1.1}) can be  uniquely determined by its value on an open subset $\omega$ at any given positive time $T$. It also shows that the solutions of (\ref{1.1}) must vanish if it vanishes in an open subset $\omega$ at time $T$.
\end{remark}
\begin{theorem}\label{theorem1.3}
Suppose that Assumptions 1-3 hold.
 If $u(x,0)\not\equiv0$, then, there exists a positive number  $C=C(\Omega,\omega)$ such that solution $u$   of \dref{1.1} has the following estimate:
\begin{eqnarray}\label{1.5}
\int_{\Omega}|u(x,0)|^2dx\!\leq\!C\exp\left(C(\frac{1}{T}+1+MT+M^2T^2)e^{CM^2T}\!{\|u_0\|_{L^2(\Omega)}^2\over \|u_0\|_{H^{-1}(\Omega)}^2}\right)
\!\times\! \int_{\omega}(|u(x,T)|^2)dx.
\end{eqnarray}
\end{theorem}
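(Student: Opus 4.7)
To improve Theorem~\ref{theorem1.1} into the observability-type estimate~\dref{1.5}, the plan is to supply a \emph{reverse} inequality of the form $\|u_0\|_{L^2(\Omega)}^2\le K(T,M)\exp\bigl(CR\cdot g(T,M)\bigr)\|u(T)\|_{L^2(\Omega)}^2$, where $R:=\|u_0\|_{L^2(\Omega)}^2/\|u_0\|_{H^{-1}(\Omega)}^2$ and $g$ is an explicit growth factor. Heuristically, $R$ plays the role of an averaged frequency of $u_0$: in the pure heat case the spectral expansion shows that the $L^2$-mass of $u_0$ carried by Dirichlet eigenmodes of $-\Delta$ with eigenvalue larger than $\lambda_*$ is at most $\lambda_*\|u_0\|_{H^{-1}(\Omega)}^2$, so the effective decay rate of $\|u(T)\|_{L^2}^2$ with respect to $\|u_0\|_{L^2}^2$ is governed by $e^{-CTR}$ rather than by the (possibly much larger) Dirichlet quotient $\|\nabla u_0\|^2/\|u_0\|^2$. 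This is exactly the content that Theorem~\ref{theorem1.1} is missing.

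\textbf{Reverse inequality.} First I would use Assumption~3 together with the identity
\begin{equation*}
\frac{d}{dt}\|u(t)\|_{H^{-1}(\Omega)}^2=-2\|u(t)\|_{L^2(\Omega)}^2+2\bigl\langle(-\Delta)^{-1}u(t),\,\partial_tu(t)-\Delta u(t)\bigr\rangle,
\end{equation*}
which is valid under Assumption~1, combined with Cauchy--Schwarz and Young's inequality, to obtain the differential inequality $\frac{d}{dt}\|u\|_{H^{-1}}^2\le-\|u\|_{L^2}^2+CM^2\|u\|_{H^{-1}}^2$ and hence the global bound
\begin{equation*}
\int_0^T\|u(s)\|_{L^2(\Omega)}^2\,ds\le e^{CM^2T}\|u_0\|_{H^{-1}(\Omega)}^2.
\end{equation*}
Next, for any cut-off $\lambda_*>0$, I would split $u_0=P_{\lambda_*}u_0+(I-P_{\lambda_*})u_0$ using the Dirichlet spectral projection of $-\Delta$ onto eigenvalues $\le\lambda_*$. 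The high-frequency part directly satisfies $\|(I-P_{\lambda_*})u_0\|_{L^2}^2\le\lambda_*\|u_0\|_{H^{-1}}^2$. For the low-frequency part, the pure-heat decay gives $\|e^{T\Delta}P_{\lambda_*}u_0\|_{L^2}^2\ge e^{-2\lambda_*T}\|P_{\lambda_*}u_0\|_{L^2}^2$, and a Duhamel comparison of $u(T)$ with $e^{T\Delta}u_0$ (using Assumption~3 on the forcing and absorbing the error through the $H^{-1}$-integral estimate just derived) produces the bilateral inequality
\begin{equation*}
\|u_0\|_{L^2(\Omega)}^2\le Ce^{2\lambda_*T+CM^2T}\|u(T)\|_{L^2(\Omega)}^2+C\lambda_*e^{CM^2T}\|u_0\|_{H^{-1}(\Omega)}^2.
\end{equation*}
Optimising at $\lambda_*\sim Re^{-CM^2T}$ absorbs the second term into the left-hand side and yields $\|u_0\|_{L^2(\Omega)}^2\le C\exp\bigl(C(1+MT+M^2T^2)e^{CM^2T}\,R\bigr)\|u(T)\|_{L^2(\Omega)}^2$.

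\textbf{Combination and main obstacle.} Substituting this reverse bound into the interpolation $\|u(T)\|_{L^2(\Omega)}^2\le A\|u_0\|_{L^2}^{2(1-\gamma)}\|u(T)\|_{L^2(\omega)}^{2\gamma}$ of Theorem~\ref{theorem1.1}, with $A=C\exp(C/T+C(MT+M^2T^2))$, one cancels the $(1-\gamma)$-power factor and is left with $\|u_0\|_{L^2}^{2\gamma}\le A\,e^{CE(T,M)R}\|u(T)\|_{L^2(\omega)}^{2\gamma}$; raising to the power $1/\gamma$ and collecting $\log A/\gamma$ (which is what contributes the $C/T$ summand in the exponent of~\dref{1.5}) produces~\dref{1.5} after routine bookkeeping. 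The technical crux is the second step: the spectral projection $P_{\lambda_*}$ does \emph{not} commute with the inhomogeneous evolution $\partial_tu-\Delta u=f$, so the clean low-/high-frequency decomposition available for the pure heat equation is only valid up to a Duhamel error. Dominating that error uniformly in $\lambda_*$, using only the $H^{-1}$-bound on $f$ from Assumption~3 together with the $L^2$-in-time estimate of the previous step, is what forces the factor $e^{CM^2T}$ to appear in~\dref{1.5} and is the place where any suboptimality in the final exponent would first arise.
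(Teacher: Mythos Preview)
Your reverse inequality hinges on the spectral splitting claim
\[
\|(I-P_{\lambda_*})u_0\|_{L^2(\Omega)}^2\le \lambda_*\,\|u_0\|_{H^{-1}(\Omega)}^2,
\]
and this is simply false: if $u_0$ is a single Dirichlet eigenfunction $e_j$ with eigenvalue $\lambda_j>\lambda_*$, the left side equals $1$ while the right side equals $\lambda_*/\lambda_j<1$. The inequality goes the other way---it is the \emph{low}-frequency part that satisfies $\|P_{\lambda_*}u_0\|_{L^2}^2\le\lambda_*\|u_0\|_{H^{-1}}^2$, which is useless for your purpose. More generally, $\|\cdot\|_{H^{-1}}$ gives no control whatsoever on high-frequency $L^2$ mass, so the split ``low frequencies via backward heat decay, high frequencies via the $H^{-1}$ norm'' cannot close. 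Even if you tried to repair the argument by keeping the Duhamel error on the low-frequency piece, that error inherits the factor $e^{2\lambda_*T}$ (since you must invert $e^{T\Delta}$ on the range of $P_{\lambda_*}$), and then the ``optimise in $\lambda_*$'' step no longer absorbs cleanly.

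The paper avoids any spectral cut-off. It works directly with the time-dependent frequency $\zeta(t)=\|u(t)\|_{L^2}^2/\|u(t)\|_{H^{-1}}^2$, differentiates it using the two energy identities you already wrote down, and shows via a completion-of-squares that $\zeta'(t)\le CM^2\zeta(t)$ under Assumption~3, hence $\zeta(t)\le e^{CM^2t}\zeta(0)$. Feeding this bound back into the $H^{-1}$ energy identity yields the backward estimate $\|u_0\|_{H^{-1}}^2\le \exp\bigl(CTe^{CM^2T}(\zeta(0)+M\sqrt{\zeta(0)})\bigr)\|u(T)\|_{H^{-1}}^2$, and passing from $H^{-1}$ to $L^2$ costs only factors of $\zeta(0)$ and $\zeta(T)^{-1}\le C(\Omega)$. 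This is a log-convexity/monotonicity argument for the Rayleigh-type quotient $\zeta$, not a low/high frequency decomposition; the key point is that the frequency of the solution (in the sense of $\zeta$) cannot grow faster than $e^{CM^2t}$, which is exactly the missing ingredient your Duhamel scheme cannot supply. Your final combination step with Theorem~\ref{theorem1.1} is fine and matches the paper; the gap is entirely in how the reverse inequality is obtained.
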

Uniqueness refers to the fact of that of initial states, given the observations at terminal are same. Under some circumstances, it suffices to derive similar inequality only about $u$ instead of comparing difference as the observation in a subset of $\Omega$ at terminal $t=T$.   Hence, it is meaningful to consider the unique continuation, if norm of the data at the terminal (i.e., solution at $t=T$) is bounded in certain function spaces. \\

We organize the paper as follows. In section 2, some preliminary results are
presented.  Section 3 is devoted to  the unique continuation  property for the solution of  (\ref{1.1}).

\section{Preliminary lemmas}

Given a positive number $\lambda$, we  define
\begin{eqnarray}\label{G}
G_\lambda(x,t)=\frac{1}{(T-t+\lambda)^{n/2}}e^{-\frac{|x-x_0|^2}{4(T-t+\lambda)}},\:\: (x,t)\in \Omega\times(0,T),
\end{eqnarray} where $x_0\in \Omega.$ $G_{\lambda}$ is referred as a \emph{caloric function}(see, e.g., \cite{EscauriazaF1}). \\

Then, for each  $t\in[0,T]$, we define functions of time by solution $u(x, t)$ of first equation in system (\ref{1.1}):
\begin{eqnarray}\label{H}
H_\lambda(t)=\int_\Omega|u(x,t)|^2G_\lambda(x,t)dx,
\end{eqnarray}
\begin{eqnarray}\label{D}
D_\lambda(t)=\int_\Omega|\nabla u(x,t)|^2G_\lambda(x,t)dx,
\end{eqnarray}
and therefore, frequency function is defined as
\begin{eqnarray}\label{N}
N_\lambda(t)=\frac{2D_\lambda(t)}{H_\lambda(t)}.
\end{eqnarray}
$N_\lambda(t)$ was first discussed in \cite{Almgren}, and sequentially in \cite{C. Escauriaza, Lin2, Poon}.
We have $H_\lambda(t)\neq0$ at any moment throughout the paper.\\

Next, we will discuss the properties for the functions $G_\lambda(x,t)$, $H_\lambda(t)$, $D_\lambda(t)$, and $N_\lambda(t)$. Note that $G_\lambda(t)$ behaves like a backward heat kernel. Subset $\omega$ lying compactly in $\Omega$ includes circles, and we pick such an open ball $B_r$ centered at $x_0\in \omega$ with radius $r$, that is, $B_r\subset\omega$. $B_r$ denotes the open ball. Let $m={\disp\textrm{sup}_{x\in\Omega}}|x-x_0|^2$. The following Lemma \ref{lemma4.1} is directly borrowed from \cite{C. Escauriaza, Phung1}.
\begin{lemma}\label{lemma4.1}
For $\lambda>0$, the function $G_\lambda(x,t)$ holds the following four identities over $\mathbb{R}^n\times[0,T]$:
\begin{eqnarray}\label{4.5}
\partial_tG_\lambda(x,t)+\triangle G_\lambda(x,t)=0,
\end{eqnarray}
\begin{eqnarray}\label{4.6}
\nabla G_\lambda(x,t)=\frac{-(x-x_0)}{2(T-t+\lambda)}G_\lambda(x,t),
\end{eqnarray}
\begin{eqnarray}\label{4.7}
\partial_i^2G_\lambda(x,t)=\frac{-1}{2(T-t+\lambda)}G_\lambda(x,t)+\frac{|x_i-x_{0i}|^2}{4(T-t+\lambda)^2}G_\lambda(x,t),
\end{eqnarray}
and for $i\neq j$,
\begin{eqnarray}\label{4.8}
\partial_i\partial_jG_\lambda(x,t)=\frac{(x_i-x_{0i})(x_j-x_{0j})}{4(T-t+\lambda)^2}G_\lambda(x,t),
\end{eqnarray}
where  $x_{0i}$'s are i-th coordinate component of $x_0$.
\end{lemma}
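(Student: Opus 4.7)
The plan is to verify all four identities by straightforward computation of partial derivatives. To streamline notation I would introduce the abbreviation $\tau = \tau(t) := T - t + \lambda > 0$, so that
\[
G_\lambda(x,t) = \tau^{-n/2}\,\exp\!\Bigl(-\tfrac{|x-x_0|^2}{4\tau}\Bigr),
\]
and note the single-variable facts $\partial_t \tau = -1$ and $\partial_i \tau = 0$. Everything else will follow from the chain and product rules applied to this explicit expression.

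For the spatial identities I would first compute $\log G_\lambda = -\tfrac{n}{2}\log\tau - \tfrac{|x-x_0|^2}{4\tau}$, so that $\partial_i \log G_\lambda = -\tfrac{x_i - x_{0i}}{2\tau}$, which directly yields (\ref{4.6}) upon multiplying by $G_\lambda$. Differentiating (\ref{4.6}) once more in $x_i$, the product rule gives a term $-\tfrac{1}{2\tau}G_\lambda$ from differentiating the coefficient, plus a term $\tfrac{(x_i-x_{0i})^2}{4\tau^2}G_\lambda$ from $\partial_i G_\lambda$ itself; this proves (\ref{4.7}). For the mixed derivatives in (\ref{4.8}) with $i\neq j$, the coefficient $-\tfrac{x_i - x_{0i}}{2\tau}$ has no dependence on $x_j$, so only the second term from the product rule survives, giving $\tfrac{(x_i-x_{0i})(x_j-x_{0j})}{4\tau^2}G_\lambda$.

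To obtain the backward heat identity (\ref{4.5}), I would sum (\ref{4.7}) over $i = 1, \dots, n$ to get
\[
\triangle G_\lambda = -\tfrac{n}{2\tau}G_\lambda + \tfrac{|x-x_0|^2}{4\tau^2}G_\lambda,
\]
and separately compute $\partial_t G_\lambda$ using $\partial_t \tau = -1$: this yields $\partial_t G_\lambda = \tfrac{n}{2\tau}G_\lambda - \tfrac{|x-x_0|^2}{4\tau^2}G_\lambda$, which is exactly $-\triangle G_\lambda$. Adding the two gives (\ref{4.5}).

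There is no real conceptual obstacle here; the lemma is a purely computational fact about an explicit Gaussian, so the only thing to watch is bookkeeping of the signs arising from the time-reversed variable $\tau = T - t + \lambda$ (in particular the $-1$ coming out of $\partial_t\tau$, which is precisely what makes $G_\lambda$ satisfy the \emph{backward} rather than forward heat equation). Once the spatial derivatives are computed via $\partial_i\log G_\lambda$, identities (\ref{4.6})--(\ref{4.8}) are immediate, and (\ref{4.5}) is a two-line verification.
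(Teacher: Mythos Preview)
Your proposal is correct; the computations are straightforward and the bookkeeping of signs (in particular from $\partial_t\tau=-1$) is handled properly. The paper itself does not supply a proof of this lemma at all --- it simply states the identities and cites \cite{C. Escauriaza, Phung1} --- so your direct verification via $\partial_i\log G_\lambda$ and the product rule is exactly the elementary argument one would expect and is entirely adequate.
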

Straight calculation combining knowledge of $G_\lambda(x,t)$ in Lemma \ref{lemma4.1}, we have
\begin{lemma}\label{lemma4.2}
For each $\lambda>0$,  the following identities holds
 for  $t\in(0,T)$:
\begin{eqnarray}\label{4.9}
{d\over dt}H_\lambda(t) = -2D_{\lambda}(t)+2\int_{\Omega}u(\partial_tu-\triangle u)G_{\lambda}dx,
\end{eqnarray}
\begin{eqnarray}\label{2.10}
&&\frac{d}{dt}\ln {H_\lambda(t)}=-N_\lambda(t)+{2\over H_\lambda(t)}\int_\Omega u(\partial_tu-\triangle u) G_\lambda dx,
\end{eqnarray}
and
\begin{eqnarray}\label{4.15}
D'_\lambda(t)
&:=&-\theta-2\int_\Omega\left(\partial_tu-\frac{x-x_0}{2(T-t+\lambda)}\cdot\nabla u+\frac{1}{2}(\triangle u-\partial_tu)\right)^2G_\lambda dx\crr\disp
&&+\frac{1}{2}\int_\Omega(\triangle u-\partial_tu)^2G_\lambda dx
+\frac{1}{(T-t+\lambda)}D_\lambda(t),
\end{eqnarray}
where
\begin{eqnarray*}
\theta:=\int_{\partial\Omega}|\nabla u|^2\partial_\nu G_\lambda d\sigma-2\int_{\partial\Omega}\partial_\nu u(\nabla u\cdot\nabla G_\lambda)d\sigma.
\end{eqnarray*}
\end{lemma}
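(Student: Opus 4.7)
The plan is to prove the three identities \eqref{4.9}, \eqref{2.10}, \eqref{4.15} in order, with most of the analytical work concentrated in \eqref{4.15}. The first two fall out from direct differentiation under the integral combined with the caloric identity $\partial_t G_\lambda+\triangle G_\lambda=0$ of Lemma~\ref{lemma4.1} and the Dirichlet condition $u|_{\partial\Omega}=0$.

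For \eqref{4.9}, I differentiate $H_\lambda(t)$, replace $\partial_t G_\lambda$ by $-\triangle G_\lambda$, and shift the Laplacian onto $u^2$ via Green's formula. The boundary terms vanish because $u=0$ forces $u^2=0$ and $\nabla(u^2)=2u\nabla u=0$ on $\partial\Omega$, while the pointwise identity $\triangle(u^2)=2|\nabla u|^2+2u\triangle u$ supplies the two right-hand side terms. Dividing by $H_\lambda$ and invoking $N_\lambda=2D_\lambda/H_\lambda$ delivers \eqref{2.10} at once.

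For \eqref{4.15}, I begin with $D_\lambda'=2\int_\Omega \nabla u\cdot\nabla u_t\,G_\lambda\,dx-\int_\Omega|\nabla u|^2\triangle G_\lambda\,dx$. One integration by parts (boundary term killed by $\partial_t u|_{\partial\Omega}=0$) turns the first integral into $-2\int_\Omega \partial_t u(G_\lambda\triangle u+\nabla u\cdot\nabla G_\lambda)\,dx$. The second integral requires a two-step Rellich-type calculation: first write $\int|\nabla u|^2\triangle G_\lambda\,dx=\int_{\partial\Omega}|\nabla u|^2\partial_\nu G_\lambda\,d\sigma-2\int_\Omega(D^2 u\,\nabla u)\cdot\nabla G_\lambda\,dx$ via Green's identity, and then invoke the pointwise formula $(D^2 u\,\nabla u)\cdot\nabla G_\lambda=\mathrm{div}((\nabla u\cdot\nabla G_\lambda)\nabla u)-(D^2 G_\lambda\nabla u)\cdot\nabla u-(\nabla u\cdot\nabla G_\lambda)\triangle u$. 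The two boundary contributions then combine to form exactly $\theta$, and the Hessian formulas \eqref{4.7}--\eqref{4.8} collapse $-2\int_\Omega(D^2 G_\lambda\nabla u)\cdot\nabla u\,dx$ into $D_\lambda(t)/(T-t+\lambda)$ minus a $|(x-x_0)\cdot\nabla u|^2$ integral weighted by $1/[2(T-t+\lambda)^2]$.

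Substituting $\nabla G_\lambda=-(x-x_0)G_\lambda/[2(T-t+\lambda)]$ throughout and collecting produces an expression containing $-\theta$, $D_\lambda/(T-t+\lambda)$, $-2\int_\Omega \partial_t u\,\triangle u\,G_\lambda\,dx$, a cross term $(T-t+\lambda)^{-1}\int_\Omega(\partial_t u+\triangle u)(x-x_0)\cdot\nabla u\,G_\lambda\,dx$, and a pure $|(x-x_0)\cdot\nabla u|^2$ integral. A completion-of-squares step then repackages the non-$\theta$, non-$D_\lambda/(T-t+\lambda)$ content precisely as $-2\int(\partial_t u-\frac{x-x_0}{2(T-t+\lambda)}\cdot\nabla u+\frac12(\triangle u-\partial_t u))^2 G_\lambda\,dx+\frac12\int(\triangle u-\partial_t u)^2 G_\lambda\,dx$, which is \eqref{4.15}. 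I expect the main obstacle to be the sign bookkeeping needed to verify that the two boundary contributions assemble exactly into $\theta$ as written, and to spot the correct grouping of cross terms for the completion of squares; both are purely algebraic but easy to mishandle.
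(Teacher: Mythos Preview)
Your proposal is correct and follows exactly the ``direct calculation'' that the paper invokes without writing out; the paper's entire proof is the single line ``Direct calculation lead to \eqref{4.9}--\eqref{4.15}.'' Your outline---caloric identity plus Green's formula for \eqref{4.9}--\eqref{2.10}, and the Rellich-type integration by parts using the Hessian formulas of Lemma~\ref{lemma4.1} followed by completion of squares for \eqref{4.15}---is the standard route and supplies precisely the details the paper omits.
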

Direct calculation lead to (\ref{4.9}) --(\ref{4.15}).\\

\begin{lemma}\label{lemma4.5}
For $\lambda>0$ and $t\in(0,T)$,  frequency function $N_\lambda(t)$ holds
\begin{eqnarray}\label{4.17}
\frac{d}{dt}\bigg[(T-t+\lambda)\exp(-M^2t)N_\lambda(t)\bigg]\leqslant CM^2(T+\lambda).
\end{eqnarray}
\end{lemma}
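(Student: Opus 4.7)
The plan is to differentiate $N_\lambda = 2D_\lambda/H_\lambda$ via the quotient rule, substitute the formulas from Lemma \ref{lemma4.2} for $H'_\lambda$ and $D'_\lambda$, and check that the result fits the shape of (\ref{4.17}). Let $\epsilon(t) := \int_\Omega u(\partial_t u - \triangle u)G_\lambda\,dx$ and denote the shifted integrand inside the square of (\ref{4.15}) by
\begin{equation*}
A' := \partial_t u - \frac{x-x_0}{2(T-t+\lambda)}\cdot \nabla u + \tfrac{1}{2}(\triangle u - \partial_t u).
\end{equation*}
Using $H'_\lambda = -2D_\lambda + 2\epsilon$ and (\ref{4.15}), the quotient rule gives
\begin{equation*}
N'_\lambda = -\frac{2\theta}{H_\lambda} - \frac{4\int_\Omega (A')^2 G_\lambda\,dx}{H_\lambda} + \frac{\int_\Omega (\triangle u - \partial_t u)^2 G_\lambda\,dx}{H_\lambda} + \frac{N_\lambda}{T-t+\lambda} + N_\lambda^2 - \frac{4 D_\lambda \epsilon}{H_\lambda^2}.
\end{equation*}

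The crux is a shifted Cauchy--Schwarz. Integrating by parts (which is legal because $u|_{\partial\Omega}=0$) and using $\nabla G_\lambda = -\frac{x-x_0}{2(T-t+\lambda)} G_\lambda$, a short computation shows $\int_\Omega u A' G_\lambda\,dx = \tfrac{\epsilon}{2} - D_\lambda$. Cauchy--Schwarz then yields $(D_\lambda - \epsilon/2)^2 \leq H_\lambda \int_\Omega (A')^2 G_\lambda\,dx$, from which
\begin{equation*}
N_\lambda^2 - \frac{4\int_\Omega (A')^2 G_\lambda\,dx}{H_\lambda} - \frac{4D_\lambda\epsilon}{H_\lambda^2} \leq \frac{4D_\lambda^2 - 4(D_\lambda - \epsilon/2)^2 - 4D_\lambda\epsilon}{H_\lambda^2} = -\frac{\epsilon^2}{H_\lambda^2} \leq 0.
\end{equation*}
This is what kills the dangerous $N_\lambda^2$ contribution.

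Next I would bound the remaining source term by the differential inequality (\ref{1.1}): $\int_\Omega(\triangle u - \partial_t u)^2 G_\lambda\,dx \leq 2M^2(D_\lambda + H_\lambda)$, contributing at most $M^2 N_\lambda + 2M^2$ after division by $H_\lambda$. The boundary term $\theta$ is nonnegative by convexity: $u = 0$ on $\partial\Omega$ gives $\nabla u = (\partial_\nu u)\nu$ on $\partial\Omega$, so $\theta = -\int_{\partial\Omega}(\partial_\nu u)^2\partial_\nu G_\lambda\,d\sigma$; convexity of $\Omega$ together with $x_0 \in \Omega$ gives $(x-x_0)\cdot\nu \geq 0$ on $\partial\Omega$, hence $\partial_\nu G_\lambda \leq 0$ and $\theta \geq 0$. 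Collecting everything yields
\begin{equation*}
N'_\lambda(t) \leq \frac{N_\lambda(t)}{T-t+\lambda} + M^2 N_\lambda(t) + 2M^2.
\end{equation*}

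Finally, direct computation gives
\begin{equation*}
\frac{d}{dt}\bigl[(T-t+\lambda)e^{-M^2 t}N_\lambda\bigr] = e^{-M^2 t}\bigl[(T-t+\lambda)N'_\lambda - N_\lambda - M^2(T-t+\lambda)N_\lambda\bigr],
\end{equation*}
and substituting the preceding bound, the $\frac{N_\lambda}{T-t+\lambda}$ and $M^2 N_\lambda$ contributions telescope away, leaving $2M^2(T-t+\lambda)e^{-M^2 t} \leq 2M^2(T+\lambda)$, which is exactly (\ref{4.17}). The main obstacle is spotting the shifted integrand $A'$: the unshifted identity $(D_\lambda - \epsilon)^2 \leq H_\lambda\int_\Omega A^2 G_\lambda\,dx$ (with $A := \partial_t u - \frac{x-x_0}{2(T-t+\lambda)}\cdot\nabla u$) cancels only part of $N_\lambda^2$ and leaves a super-linear term that no naive Young inequality closes, whereas the shift baked into (\ref{4.15}) produces exactly the right perfect square.
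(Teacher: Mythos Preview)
Your proof is correct and follows essentially the same route as the paper: quotient rule, cancellation of the quadratic terms via Cauchy--Schwarz applied to $\int_\Omega u A' G_\lambda\,dx$, the bound on $\int_\Omega(\triangle u-\partial_t u)^2 G_\lambda\,dx$ from (\ref{1.1}), the sign $\theta\ge 0$ from convexity, and then the integrating factor. The only difference is presentation: the paper rewrites $-2D_\lambda H'_\lambda/H_\lambda^2$ directly as $\frac{4}{H_\lambda^2}\bigl[\int_\Omega u A' G_\lambda\,dx\bigr]^2 - \frac{1}{H_\lambda^2}\bigl[\int_\Omega u(\triangle u-\partial_t u)G_\lambda\,dx\bigr]^2$ and then drops both nonpositive combinations in one stroke, whereas you first carry $N_\lambda^2 - 4D_\lambda\epsilon/H_\lambda^2$ and then verify explicitly that it collapses to $-\epsilon^2/H_\lambda^2\le 0$; these are the same computation.
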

\begin{proof}
\begin{eqnarray*}
N'_\lambda(t)&=&\frac{2}{H^2_\lambda(t)}\left[D'_\lambda(t)H_\lambda(t)-D_\lambda(t)H'_\lambda(t)\right]\crr\disp
&:=& I_1+I_2,
\end{eqnarray*} representing first term and second term; further,
\begin{eqnarray*}
I_1 &=&\frac{2}{H_\lambda(t)}\bigg\{-\theta-2\int_\Omega\left(\partial_tu-\frac{x-x_0}{2(T-t+\lambda)}\cdot\nabla u+\frac{1}{2}(\triangle u-\partial_tu)\right)^2G_\lambda dx+\frac{1}{2}\int_\Omega(\triangle u-\partial_tu)^2G_\lambda\crr\disp
&&+\frac{1}{(T-t+\lambda)}D_\lambda(t)\bigg\}H_{\lambda}(t)\crr\disp
&=&{2\over H_{\lambda}(t)} \left(-\theta +{D_{\lambda}(t)\over T-t+\lambda}\right) -{4\over H_{\lambda}(t)}\left[\int_\Omega\left(\partial_tu-\frac{x-x_0}{2(T-t+\lambda)}\cdot\nabla u+\frac{1}{2}(\triangle u-\partial_tu)\right)^2G_\lambda dx\right] \crr\disp
&& + {1\over H_{\lambda}(t)}\int_{\Omega}(\triangle u-\partial_t u)^2G_{\lambda}dx.
\end{eqnarray*}
and
\begin{eqnarray*}
I_2 &=&{1\over H^2_{\lambda}(t)}\bigg\{4\bigg[\int_\Omega u\left(\partial_tu-\frac{x-x_0}{2(T-t+\lambda)}\cdot\nabla u+\frac{1}{2}(\triangle u-\partial_t u)\right)G_\lambda dx\bigg]^2\crr\disp
&&-\left[\int_\Omega u(\triangle u-\partial_tu)G_\lambda dx\right]^2\bigg\}.\crr\disp
\end{eqnarray*} from Lemma \ref{lemma4.2}. Therefore,
\begin{eqnarray*}
N_{\lambda}'(t) &=& {2\over H_{\lambda}(t)}\left(-\theta + {1\over T-t+\lambda}D_{\lambda}(t)\right)+{1\over H_{\lambda}(t)}\int_{\Omega}(\triangle u-\partial_t u)^2G_\lambda dx\crr\disp
&&-{4\over H^2_{\lambda}(t)}\bigg[\int_\Omega u\left(\partial_tu-\frac{x-x_0}{2(T-t+\lambda)}\cdot\nabla u+\frac{1}{2}(\triangle u-\partial_tu)\right)G_\lambda dx\bigg]^2 \crr\disp
&&- {1\over H^2_{\lambda}(t)}\bigg(\int_{\Omega} u(\triangle u-\partial_t u)G_{\lambda}dx\bigg)^2\crr\disp
&&-{4\over H_{\lambda}(t)}\int_{\Omega}(\partial_tu-\frac{x-x_0}{2(T-t+\lambda)}\cdot\nabla u+\frac{1}{2}(\triangle u-\partial_tu))^2G_{\lambda}dx\crr\disp
&\le&\frac{1}{(T-t+\lambda)}N_\lambda(t)-2\frac{\theta}{H_\lambda(t)}+{1\over H_{\lambda}(t)}\int_{\Omega}(\triangle u-\partial_t u)^2G_{\lambda}dx,
\end{eqnarray*}
which leads to
\begin{eqnarray}\label{4.21}
N'_{\lambda} (t) -{1\over T-t+\lambda}N_{\lambda}(t) + {2\theta\over H_{\lambda}(t)} \le {1\over H_{\lambda}(t)}\int_{\Omega} (\triangle u - \partial_t u)^2G_{\lambda}dx.
\end{eqnarray}
On the right hand side,
\begin{eqnarray*}
{1\over H_{\lambda}(t)}\int_{\Omega} (\triangle u - \partial_t u)^2G_{\lambda}dx
&\le& {CM^2\over H_{\lambda}(t)} \int_{\Omega}|\nabla u|^2 G_{\lambda}dx + {2M^2\over H_{\lambda}(t)} \int_{\Omega} |u|^2G_{\lambda}dx.
\end{eqnarray*}
 Hence,
\begin{eqnarray*}
N'_{\lambda} (t) - {N_{\lambda}(t)\over T-t+\lambda} + {2\theta\over H_{\lambda}(t)} -CM^2N_{\lambda}(t) \le CM^2.
\end{eqnarray*}
Since  the domain
 is convex, we have $\theta\ge 0.$  Thus,
\begin{eqnarray}\label{4.22}
N'_{\lambda}(t) - \big({1\over {T-t+\lambda}}+CM^2 \big)N_{\lambda}(t) \le CM^2.
\end{eqnarray}
Therefore, by multiplying integral factor $\exp\big(\ln(T-t+\lambda)-M^2t\big)$, for any $t\in(0,T)$, (\ref{4.22}) can be written as
\begin{eqnarray*}
{d\over dt}\bigg[N_{\lambda}(t)(T-t+\lambda)\exp(-M^2t)\bigg] \le CM^2(T-t+\lambda),
\end{eqnarray*} which leads to the conclusion. \end{proof}

Let a constant $K_T$ be
\begin{eqnarray}\label{4.23}
K_{T}=4\ln\bigg({\int_{\Omega}|u(x, 0)|^2dx\over\int_{\Omega}|u(x, T)|^2dx}\bigg)+{2m\over T} +  CM^2T^2+CMT +\frac{n}{2}.
\end{eqnarray}
\begin{lemma}\label{lemma4.6}
For each $\lambda>0$,  it holds that:
\begin{eqnarray}\label{4.24}
\lambda e^{-M^2T} N_\lambda(T)+\frac{n}{2}\leq\left(\frac{\lambda}{T}+1\right)K_T.
\end{eqnarray}
\end{lemma}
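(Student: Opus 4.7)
The plan is to convert Lemma~\ref{lemma4.5}, which bounds the growth of the frequency function $N_\lambda(t)$, into a terminal-time estimate for $N_\lambda(T)$, and then translate the bound back to the $L^2$-norms of $u$ via the identity \dref{2.10} and pointwise properties of the caloric kernel $G_\lambda$.

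First, I would set $g(t):=(T-t+\lambda)e^{-M^2 t}N_\lambda(t)$, so that Lemma~\ref{lemma4.5} reads $g'(t)\leq CM^2(T+\lambda)$. Integrating from $t$ to $T$ yields $g(T)\leq g(t)+CM^2(T+\lambda)(T-t)$ for every $t\in[0,T]$; averaging this inequality over $t\in[0,T]$ and using $g(t)\leq (T+\lambda)N_\lambda(t)$ gives
\[\lambda e^{-M^2 T}N_\lambda(T) \;\leq\; \frac{T+\lambda}{T}\int_0^T N_\lambda(t)\,dt \;+\; CM^2 T(T+\lambda).\]

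Next, the time integral of $N_\lambda$ would be bounded via \dref{2.10}. Writing $N_\lambda(t)=-\tfrac{d}{dt}\ln H_\lambda(t)+R(t)$, Cauchy--Schwarz applied to $R$ together with \dref{1.1} yields
\[|R(t)| \;\leq\; \frac{2M}{H_\lambda(t)}\bigl[H_\lambda^{1/2}(t)D_\lambda^{1/2}(t)+H_\lambda(t)\bigr] \;\leq\; \sqrt{2}\, M\sqrt{N_\lambda(t)}+2M,\]
and Young's inequality (with a parameter chosen to produce the factor $4$ appearing in $K_T$) converts this into $\int_0^T N_\lambda\,dt \leq 4\ln(H_\lambda(0)/H_\lambda(T)) + CM^2 T + CMT$. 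Finally, the pointwise estimates $G_\lambda(x,0)\leq(T+\lambda)^{-n/2}$ and $G_\lambda(x,T)\geq \lambda^{-n/2}e^{-m/(4\lambda)}$ deliver
\[\ln\frac{H_\lambda(0)}{H_\lambda(T)} \;\leq\; \ln\frac{\int_\Omega|u(x,0)|^2\,dx}{\int_\Omega|u(x,T)|^2\,dx} + \frac{m}{4\lambda} - \frac{n}{2}\ln\!\Big(1+\tfrac{T}{\lambda}\Big).\]
Substituting back, absorbing the $-\tfrac{n}{2}\ln(1+T/\lambda)$ contribution into the $\tfrac{n}{2}$ summand on the left of \dref{4.24}, and collecting the remaining constants into the form of $K_T$ will complete the argument.

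The main difficulty will be the bookkeeping: every $\lambda$-dependent term must fit exactly into $(1+\lambda/T)K_T$, and in particular the $\tfrac{m}{4\lambda}$ arising from the pointwise bound on $G_\lambda(x,T)$ must be absorbed by $(1+\lambda/T)\cdot\tfrac{2m}{T}$. This matches up comfortably whenever $\lambda$ is comparable to $T$ or larger, since then $\tfrac{m}{4\lambda}\leq \tfrac{m}{T}$; for the complementary regime $\lambda\ll T$ the left-hand side of \dref{4.24} is already $\approx \tfrac{n}{2}$, and the estimate reduces to the trivial bound $\tfrac{n}{2}\leq K_T$ built into the definition of $K_T$.
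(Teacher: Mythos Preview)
Your plan has a genuine gap in the final bookkeeping step, and the remedy you propose for small~$\lambda$ is circular.

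After averaging over $[0,T]$ you are forced to bound $H_\lambda(T)$ from below, and the only pointwise estimate available is $G_\lambda(x,T)\geq \lambda^{-n/2}e^{-m/(4\lambda)}$. This injects a term $m/(4\lambda)$ into $\ln\bigl(H_\lambda(0)/H_\lambda(T)\bigr)$, which after multiplication by $4(T+\lambda)/T$ becomes $m/\lambda+m/T$. The right-hand side $(1+\lambda/T)K_T$ stays bounded as $\lambda\to 0^+$, so the inequality cannot hold uniformly in $\lambda$ with this argument. Your claim that ``for $\lambda\ll T$ the left-hand side is already $\approx n/2$'' assumes precisely what the lemma is meant to prove: there is no a~priori control on $\lambda N_\lambda(T)$, since $N_\lambda(T)$ is a ratio of two $G_\lambda(\cdot,T)$-weighted integrals and the weight concentrates on a ball of radius $\sim\sqrt{\lambda}$ about $x_0$.

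The paper sidesteps this by averaging over $[0,T/2]$ rather than $[0,T]$. The logarithmic term then becomes $\ln\bigl(H_\lambda(0)/H_\lambda(T/2)\bigr)$, and the relevant Gaussian width is $T/2+\lambda\geq T/2$, so the exponent satisfies $m/\bigl(4(T/2+\lambda)\bigr)\leq m/(2T)$ uniformly in $\lambda$. The price is that the bound now features $\int_\Omega|u(x,T/2)|^2\,dx$ instead of $\int_\Omega|u(x,T)|^2\,dx$; this is exactly where Assumption~2 enters, via $\int_\Omega|u(x,T)|^2\,dx\leq e^{CMT}\int_\Omega|u(x,T/2)|^2\,dx$. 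Note that your scheme never invokes Assumption~2 at all, which is a signal that something is missing. If you replace the averaging interval by $[0,T/2]$ and then use Assumption~2 to pass from $T/2$ back to $T$, the rest of your outline (Young's inequality on $R(t)$ to produce the factor~$4$, the pointwise bounds on $G_\lambda$) goes through and matches the paper's proof.
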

\begin{proof}
 Integrating (\ref{4.17}) over $(t, T)$, we infer
\begin{eqnarray*}
\lambda e^{-M^2T}N_{\lambda}(T) - (T-t+\lambda)e^{-M^2t}N_{\lambda}(t)\le CM^2(T+\lambda)T,
\end{eqnarray*}
integrating the above on $(0,\,\,{T\over 2})$, we get
\begin{eqnarray*}
{T\over 2}\lambda e^{-M^2T}N_{\lambda}(T) \le (T+\lambda)\int_0^{T\over 2} N_{\lambda}(t)dt + M^2{T^2\over 2}(T+\lambda).
\end{eqnarray*}
Since Lemma \ref{lemma4.2}, we have
\begin{eqnarray*}
&&\int_{0}^{T\over 2} N_{\lambda}(t)dt = -\int_0^{T\over 2} {H_{\lambda}'(t)\over H_{\lambda}(t)}dt + \int_0^{T\over2}{2\over H_{\lambda}(t)}\int_{\Omega}u(\partial_tu-\triangle u)G_{\lambda}dxdt\\
&&\quad= -\ln({H_{\lambda}({T\over 2})\over H_{\lambda}(0)})+ \int_0^{T\over2}{2\over H_{\lambda}(t)}\int_{\Omega}u(\partial_tu-\triangle u)G_{\lambda}dxdt.
\end{eqnarray*}
On the right hand side,
\begin{eqnarray*}
&&\int_0^{T\over2}{2\over H_{\lambda}(t)}\int_{\Omega}u(\partial_tu-\triangle u)G_{\lambda}dxdt.\\
&\le& \int_0^{T\over2} {CM\over H_{\lambda}(t)}\int_{\Omega}(|\nabla u|+|u|)|u|G_{\lambda}dxdt\\
&\le& \int_0^{T\over2} {CM\over H_{\lambda}(t)}(\int_{\Omega}|\nabla u||u|G_{\lambda}dxdt+\int_{\Omega}|u|^2G_{\lambda}dxdt)\\
&:=& I_3+I_4.
\end{eqnarray*}
Estimating $I_3$ and $I_4$, it follows that
\begin{eqnarray*}
I_3:= \int_0^{T\over2} {CM\over H_{\lambda}(t)}\int_{\Omega}|\nabla u||u|G_{\lambda}dxdt
\leq {1\over 2}\int_0^{T\over 2}{2\int_{\Omega}|\nabla u|^2G_{\lambda}dx\over H_{\lambda}(t)}dt + CMT,
\end{eqnarray*}
and
\begin{eqnarray*}
I_4&:=&\int_0^{T\over 2}{CM\over H_{\lambda}(t)}\int_{\Omega}u^2 G_{\lambda}dxdt\leq CMT.
\end{eqnarray*}
In $I_3$, the term ${1\over 2}\int_{0}^{T\over 2}N_{\lambda}(t)dt$ can be moved to the left for combination.
Hence,
\begin{eqnarray*}
{T\over 2}\lambda e^{-M^2T}N_{\lambda}(t) \le 2(T+\lambda)\bigg[\ln{H_{\lambda}(0)\over H_{\lambda}({T\over 2})}+CMT\bigg] +CM^2{T^2\over 2}(T+\lambda).
\end{eqnarray*}
In the term $\ln{H_{\lambda}(0)\over H_{\lambda}({T\over 2})}$,
\begin{eqnarray*}
\frac{H_\lambda(0)}{H_\lambda(\frac{T}{2})}&=&{\int_\Omega|u(x,0)|^2(T+\lambda)^{-\frac{d}{2}}\cdot e^{-\frac{|x-x_0|^2}{4(T+\lambda)}}dx\over\int_\Omega|u(x,\frac{T}{2})|^2(\frac{T}{2}+\lambda)^{-\frac{d}{2}}\cdot e^{-\frac{|x-x_0|^2}{4(\frac{T}{2}+\lambda)}}dx}\crr\disp
&\leq&\frac{\int_\Omega|u(x,0)|^2dx\cdot(\frac{T}{2}+\lambda)^{\frac{d}{2}}}{\int_\Omega|u(x,\frac{T}{2})|^2\cdot e^{-\frac{|x-x_0|^2}{4(\frac{T}{2}+\lambda)}}dx\cdot(T+\lambda)^{\frac{d}{2}}}\crr\disp
&\leq&\frac{\int_\Omega|u(x,0)|^2dx\cdot(\frac{T}{2}+\lambda)^{\frac{d}{2}}}{\int_\Omega|u(x,\frac{T}{2})|^2 dx\cdot(T+\lambda)^{\frac{d}{2}}}\cdot e^{\frac{m}{4(\frac{T}{2}+\lambda)}}\crr\disp
&\leq&e^{\frac{m}{2T}}\frac{\int_\Omega|u(x,0)|^2dx}{\int_\Omega|u(x,\frac{T}{2})|^2dx}.
\end{eqnarray*}
Therefore,
\begin{eqnarray} \label{4.25}
&&{T\over 2}\lambda e^{-M^2T}N_{\lambda}(t) \le 2(T+\lambda)({m\over 2T}+\ln\big(\frac{\int_\Omega|u(x,0)|^2dx}{\int_\Omega|u(x,\frac{T}{2})|^2dx}\big)
+CMT)+ CM^2{T^2\over 2}(T+\lambda).
\end{eqnarray}
Using Assumption 1, we have
\begin{eqnarray}\label{4.26}
{\int_{\Omega }|u(x, T)|^2dx\over \int_{\Omega} |u(x, {T\over 2})|^2dx}\le \exp(CMT).
\end{eqnarray}
From (\ref{4.26}), we can move the ratio term on the left to the right side, then add the result to (\ref{4.25}), and obtain
\begin{eqnarray*}
\lambda e^{-M^2T}N_{\lambda}(T) &\le&({\lambda\over T}+1)\bigg[{2m\over T} + 4\ln\bigg({\int_{\Omega}|u(x, 0)|^2dx\over\int_{\Omega}|u(x, T)|^2dx}\bigg) + CMT+CM^2T^2\bigg]\\
&\le&({\lambda\over T} + 1)(K_T-{n\over 2})\\
&\le&({\lambda\over T} + 1)K_T-{n\over 2},
\end{eqnarray*}
and we obtain the result.
\end{proof}

\begin{lemma}\label{lemma4.7}
For $T>0$, the following estimate holds
\begin{eqnarray*}\label{4.28}
&&\left[1-\frac{8e^{M^2T}\lambda}{r^2}\left(\frac{\lambda}{T}+1\right)K_{T}\right]\int_\Omega|x-x_0|^2|u(x, T)|^2e^{-\frac{|x-x_0|^2}{4\lambda}}dx\nonumber\\
&&\quad\leq 8e^{M^2T}\lambda\left(\frac{\lambda}{T}+1\right){K}_{T}\int_{B_r}|u(x, T)|e^{-\frac{|x-x_0|^2}{4\lambda}}dx.
\end{eqnarray*}
\end{lemma}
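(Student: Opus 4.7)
The plan is to first bound $\int_\Omega |x-x_0|^2 |u(x,T)|^2 G_\lambda(x,T)\,dx$ from above by a multiple of $H_\lambda(T)$ via Lemma \ref{lemma4.6}, and then split $\Omega = B_r \cup (\Omega\setminus B_r)$ to transfer the unweighted $L^2$-mass on the far region back onto the weighted second-moment integral, leaving only $B_r$ as an independent piece.

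I would begin from the identity $\nabla G_\lambda(x,T) = -\tfrac{x-x_0}{2\lambda} G_\lambda(x,T)$ supplied by Lemma \ref{lemma4.1}, which rewrites $|x-x_0|^2 G_\lambda = -2\lambda (x-x_0)\cdot\nabla G_\lambda$. Multiplying by $u(x,T)^2$, integrating over $\Omega$ and integrating by parts (the boundary term vanishes since $u=0$ on $\partial\Omega$ by Assumption 1), I get
$$\int_\Omega |x-x_0|^2 u^2 G_\lambda\,dx = 2\lambda n\, H_\lambda(T) + 4\lambda \int_\Omega u\,(x-x_0)\cdot\nabla u\, G_\lambda\,dx.$$
Young's inequality $4\lambda |u||x-x_0||\nabla u| \leq \tfrac12 |x-x_0|^2 u^2 + 8\lambda^2 |\nabla u|^2$ and absorption of the first term into the left-hand side then yield, using $2D_\lambda(T)=N_\lambda(T) H_\lambda(T)$,
$$\int_\Omega |x-x_0|^2 u^2 G_\lambda\,dx \;\leq\; \bigl(4\lambda n + 8\lambda^2 N_\lambda(T)\bigr)\,H_\lambda(T).$$

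Next I would invoke Lemma \ref{lemma4.6} in the form $\lambda N_\lambda(T) \leq e^{M^2T}\bigl[(\lambda/T+1)K_T - n/2\bigr]$. The $n$-contributions combine as $4\lambda n\bigl(1-e^{M^2T}\bigr) \leq 0$, so
$$4\lambda n + 8\lambda^2 N_\lambda(T) \;\leq\; 8\lambda\, e^{M^2T}(\lambda/T+1)K_T.$$
Multiplying through by $\lambda^{n/2}$ to strip the kernel prefactor from $G_\lambda(x,T)$, this becomes
$$\int_\Omega |x-x_0|^2 u^2 e^{-\frac{|x-x_0|^2}{4\lambda}}dx \;\leq\; 8\lambda\, e^{M^2T}(\lambda/T+1)K_T \int_\Omega u^2 e^{-\frac{|x-x_0|^2}{4\lambda}}dx.$$

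Finally, I would split the right-hand side over $B_r$ and $\Omega\setminus B_r$ and use $1 \leq |x-x_0|^2/r^2$ on $\Omega\setminus B_r$ to bound the second piece by $r^{-2}\int_\Omega |x-x_0|^2 u^2 e^{-|x-x_0|^2/(4\lambda)}dx$. Moving this term to the left gives the stated inequality. The only slightly delicate point in the whole argument is the $n$-cancellation in the third paragraph, which relies on the precise $+n/2$ offset provided by Lemma \ref{lemma4.6}; without it, one would be left with an extra $4\lambda n\, H_\lambda(T)$ term and the coefficient in front of the bracketed factor on the left would not be $1$.
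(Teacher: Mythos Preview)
Your argument is correct and follows essentially the same route as the paper: the paper quotes the weighted inequality
\[
\int_\Omega\frac{|x-x_0|^2}{8\lambda}|f|^2e^{-\frac{|x-x_0|^2}{4\lambda}}dx\le 2\lambda\int_\Omega|\nabla f|^2e^{-\frac{|x-x_0|^2}{4\lambda}}dx+\frac{n}{2}\int_\Omega|f|^2e^{-\frac{|x-x_0|^2}{4\lambda}}dx
\]
from \cite{EscauriazaF1,Phung0}, whereas you rederive it via the integration-by-parts/Young argument (which is exactly how that inequality is proved), and the subsequent use of Lemma~\ref{lemma4.6}, the $n$-cancellation, and the $B_r$ versus $\Omega\setminus B_r$ split are identical to the paper's. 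Your remark about the role of the $+n/2$ offset in Lemma~\ref{lemma4.6} is precisely the point the paper uses implicitly when it passes from $8\lambda\bigl(\lambda N_\lambda(T)+\tfrac{n}{2}\bigr)$ to $8\lambda e^{M^2T}(\lambda/T+1)K_T$.
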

\begin{proof}
We borrow a inequality from \cite{EscauriazaF1} (as well as in \cite{Phung0}) that for any $f \in H_0^1(\Omega)$ and for a $\lambda>0$,
\begin{eqnarray}\label{4.28}
\int_\Omega\frac{|x-x_0|^2}{8\lambda}|f(x)|^2e^{-\frac{|x-x_0|^2}{4\lambda}}dx&\leq&2\lambda\int_\Omega|\nabla f(x)|^2e^{-\frac{|x-x_0|^2}{4\lambda}}dx+\frac{n}{2}\int_\Omega|f(x)|^2e^{-\frac{|x-x_0|^2}{4\lambda}}dx.\nonumber\\
\end{eqnarray}
From this fact,
\begin{eqnarray*}
&&\int_\Omega|x-x_0|^2|u(x,T)|^2e^{-\frac{|x-x_0|^2}{4\lambda}}dx\crr\disp
&\leq&8\lambda\bigg(2\lambda\int_\Omega|\nabla u(x,T)|^2 e^{-\frac{|x-x_0|^2}{4\lambda}}dx+\frac{n}{2}\int_\Omega|u(x,T)|^2e^{-\frac{|x-x_0|^2}{4\lambda}}dx\bigg)\crr\disp
&\leq&8\lambda\left(\lambda N_\lambda(T)+\frac{n}{2}\right)\int_\Omega|u(x,T)|^2e^{-\frac{|x-x_0|^2}{4\lambda}}dx\crr\disp
&\leq&8\lambda\left(\lambda N_\lambda(T)+\frac{n}{2}\right)\bigg[\int_{B_r}|u(x,T)|^2e^{-\frac{|x-x_0|^2}{4\lambda}}dx+\frac{1}{r^2}\int_{\Omega\setminus B_r}|x-x_0|^2|u(x,T)|^2e^{-\frac{|x-x_0|^2}{4\lambda}}dx\bigg],
\end{eqnarray*} as ${|x-x_0|\over r} \ge 1$ when $x\not\in B_r$.\\

With help of (\ref{4.24}), we can observe
\begin{eqnarray*}
\int_\Omega|x-x_0|^2|u(x,T)|^2e^{-\frac{|x-x_0|^2}{4\lambda}}dx&\leq&8\lambda e^{M^2T}\left(\frac{\lambda}{T}+1\right)K_{T}\bigg[\int_{B_r}|u(x,T)|^2e^{-\frac{|x-x_0|^2}{4\lambda}}dx\crr\disp
&&+\frac{1}{r^2}\int_\Omega|x-x_0|^2|u(x,T)|^2e^{-\frac{|x-x_0|^2}{4\lambda}}dx\bigg].
\end{eqnarray*}
We can arrive at the result from the following
\begin{eqnarray*}
&&\left[1-\frac{8e^{M^2T}\lambda}{r^2}\left(\frac{\lambda}{T}+1\right)K_{T}\right]\int_\Omega|x-x_0|^2|u(x, T)|^2e^{-\frac{|x-x_0|^2}{4\lambda}}dx\nonumber\\
&&\quad\leq 8e^{M^2T}\lambda\left(\frac{\lambda}{T}+1\right){K}_{T}\int_{B_r}|u(x, T)|e^{-\frac{|x-x_0|^2}{4\lambda}}dx.
\end{eqnarray*}
 This complete the proof.
\end{proof}

\section{The unique continuation  property}\label{sec2}
\subsection{Proof of Theorem \ref{theorem1.1} }
\begin{proof}
We take $$\lambda = {1\over 2}\left({-T}+\sqrt{T^2+{r^2T\over 4K_Te^{M^2T}}}\right)>0$$ such that
\begin{eqnarray}\label{4.3.1}
\frac{8e^{M^2T}\lambda}{r^2}\left(\frac{\lambda}{T}+1\right){K}_{T}=\frac{1}{2}.
\end{eqnarray}
It follows that
\begin{eqnarray*}
\frac{1}{\lambda}&=&2\frac{T+\sqrt{T^2+\frac{Tr^2}{4e^{M^2T}{K}_{T}}}}{\frac{Tr^2}{4e^{M^2T}{K}_{T}}}\crr\disp
&=&8\left(T+\sqrt{T^2+\frac{Tr^2}{4e^{M^2T}{K}_{T}}}\right)\frac{1}{Tr^2}e^{M^2T}{K}_{T}\crr\disp
&\leq&8\left(2T+\sqrt{\frac{Tr^2}{4e^{M^2T}{K}_{T}}}\right)\frac{1}{Tr^2}e^{M^2T}{K}_{T}\crr\disp
&\leq&\left(16+\frac{4r}{\sqrt{m}}\right)\frac{1}{r^2}e^{M^2T}{K}_{T},
\end{eqnarray*} as ${m\over T}\le K_T$ and $e^{M^2T}\ge 1$.
We obtain
\begin{eqnarray}\label{4.3.2}
e^{\frac{m}{4\lambda}}&\leq&e^{(4m+r\sqrt{m})\frac{1}{r^2}{K}_{T}e^{M^2T}}\crr\disp
&\leq&e^{(4m+r\sqrt{m})\frac{1}{r^2}\frac{n}{2}e^{M^2T}}e^{(4m+r\sqrt{m})\frac{1}{r^2}e^{M^2T}({2m\over T} +  CM^2T^2+CMT) }\crr\disp
&&\times\left(\frac{\int_\Omega|u(x,0)|^2dx}{\int_\Omega|u(x,T)|^2dx}\right)^{{4\over r^2}(4m+r\sqrt{m})e^{M^2T}}.
\end{eqnarray}
From Lemma \ref{lemma4.7}, we have
\begin{eqnarray}\label{4.3.3}
&&\int_\Omega|x-x_0|^2|u(x,T)|^2e^{-\frac{|x-x_0|^2}{4\lambda}}dx\nonumber\\
&&\quad\leq r^2\int_{B_r}|u(x,T)|^2e^{-\frac{|x-x_0|^2}{4\lambda}}dx.
\end{eqnarray}
Combining (\ref{4.3.2}) and (\ref{4.3.3}),
\begin{eqnarray*}
&&\int_\Omega|u(x,T)|^2e^{-\frac{m}{4\lambda}}dx\leq\int_\Omega|u(x,T)|^2e^{-\frac{|x-x_0|^2}{4\lambda}}dx\crr\disp
&\leq&\int_{\Omega\backslash B_r}|u(x,T)|^2e^{-\frac{|x-x_0|^2}{4\lambda}}dx+\int_{B_r}|y(x,T)|^2e^{-\frac{|x-x_0|^2}{4\lambda}}dx\crr\disp
&\leq&\frac{1}{r^2}\int_\Omega|x-x_0|^2|u(x,T)|^2e^{-\frac{|x-x_0|^2}{4\lambda}}dx+\int_{B_r}|u(x,T)|^2e^{-\frac{|x-x_0|^2}{4\lambda}}dx\crr\disp
&\leq&2\int_{B_r}|u(x,T)|^2e^{-\frac{|x-x_0|^2}{4\lambda}}dx\crr\disp
&\leq&2\int_{B_r}|u(x,T)|^2dx,
\end{eqnarray*}
hence
\begin{eqnarray*}
\int_\Omega|u(x,T)|^2dx&\leq&2e^{\frac{m}{4\lambda}}\int_{B_r}|u(x,T)|^2dx\crr\disp
&\leq&2e^{(4m+r\sqrt{m})\frac{1}{r^2}\frac{n}{2}e^{M^2T}}e^{(4m+r\sqrt{m})\frac{1}{r^2}({2m\over T} +  CM^2T^2+CMT )e^{M^2T}}\crr\disp
&&\times\left(\frac{\int_\Omega|u(x,0)|^2dx}{\int_\Omega|u(x,T)|^2dx}\right)^{{4\over r^2}(4m+r\sqrt{m})e^{M^2T}}\times\int_{B_r}|u(x,T)|^2dx.
\end{eqnarray*}
Thus, we have
\begin{eqnarray*}
\int_\Omega|u(x,T)|^2dx&\leq& 2e^{Ce^{M^2T}\over r^2}e^{\frac{C}{r^2}[{2m\over T}+ M^2T^2+MT]e^{M^2T}}\left(\frac{\int_\Omega|u(x,0)|^2dx}{\int_\Omega|u(x,T)|^2dx}\right)^{Ce^{M^2T}\over r^2}\crr\disp
&&\times\int_{B_r}|u(x,T)|^2dx.
\end{eqnarray*}
This is equivalent to the following inequality:
\begin{eqnarray}\label{4.3.4}
\int_\Omega|u(x,T)|^2dx&\leq&Ce^{C[{2m\over T}+ M^2T^2+MT]}\left(\int_\Omega\left(|u(x,0)|^2\right)dx\right)^{\frac{C'}{r^2+C'}}\crr\disp
&&\times\left(\int_{B_r}|u(x,T)|^2dx\right)^{\frac{r^2}{r^2+C'}},
\end{eqnarray}
where $C'=4(4m+r\sqrt{m})e^{M^2T}$.
Let $\gamma=\frac{r^2}{r^2+C'}$, and the above estimate gives
\begin{eqnarray*}
\int_\Omega|u(x,T)|^2dx\leq Ce^{C[{2m\over T}+ M^2T^2+MT]}\left(\int_\Omega(|u(x,0)|^2dx\right)^{1-\gamma}\left(\int_\omega(|u(x,T)|^2dx\right)^\gamma.
\end{eqnarray*}
 This complete the proof.
\end{proof}

\subsection{Proof of Theorem \ref{theorem1.3}}

\begin{proof}
We will first prove a backward uniqueness estimate:
\begin{eqnarray}\label{4.29}
 \|u(0)\|_{H^{-1}(\Omega)}^2\le \exp(2e^{CM^2T} \left(\zeta(0) + CM\sqrt{\zeta(0)}\right)T)\|u(T)\|_{H^{-1}(\Omega)}^2.
\end{eqnarray}
We first address two energy estimates  from  equation (\ref{1.1}).
Multiplying $u$ and $(-\triangle)^{-1}u$ with $\partial_tu-\triangle u$, we have two energy identities
\begin{eqnarray}\label{3.6}
&&{1\over 2}{d\over dt}\|u(t)\|_2^2 + \|u(t)\|^2_{H^1_0(\Omega)} = \langle(\partial_tu-\triangle u), u\rangle,\nonumber\\
&&{1\over 2}{d\over dt}\|u(t)\|_{H^{-1}(\Omega)}^2 + \|u(t)\|_2^2 = \langle(\partial_tu-\triangle u), (-\triangle)^{-1}u\rangle.
\end{eqnarray}
Let $f =\partial_tu-\triangle u,$ and $\zeta(t): = {\|u(t)\|_2^2\over \|u(t)\|^2_{H^{-1}}},$ then
\begin{eqnarray*}
\zeta'(t) &=& {2 \over \|u\|_{H^{-1}(\Omega)}^4}\left({\langle f, u\rangle\|u\|^2_{H^{-1}(\Omega)}-\|u\|^2_{H_0^1(\Omega)}\|u\|^2_{H^{-1}(\Omega)} - \langle f, (-\Delta)^{-1}u\rangle\|u\|_2^2 +\|u\|_2^4}\right).
\end{eqnarray*}
By direct computation, we have
\begin{eqnarray*}
&&\|u\|^4 - \|u\|^2\langle f, (-\triangle)^{-1}u\rangle = |\langle\triangle u+ {f\over 2}, (-\triangle)^{-1}u\rangle|^2-|\langle{f\over 2}, (-\triangle)^{-1}u\rangle|^2 \nonumber \\
&\leq&\big(\|u\|^2_{H_0^1(\Omega)}+\|{f\over 2}\|_{H^{-1}(\Omega)}^2-\langle f,u\rangle\big)\|u\|^2_{H^{-1}(\Omega)}-|\langle{f\over 2}, (-\triangle)^{-1}u\rangle|^2.
\end{eqnarray*}
Therefore, we can obtain the following estimate:
\begin{eqnarray*}
{d\over dt}\zeta(t)\le {2\over \|u\|^2_{H^{-1}(\Omega)}}\|{f\over 2}\|_{H^{-1}(\Omega)}^2.
\end{eqnarray*}
By Assumption 2,
 \begin{eqnarray}\label{3.7a}
\|f\|_{H^{-1}(\Omega)}\le CM\|u\|_{2},
\end{eqnarray} thus
 \begin{eqnarray}\label{3.7}
 \zeta(t)\le e^{CM^2t}\zeta(0).
 \end{eqnarray}
 Applying this to $H^{-1}$ energy identity (\ref{3.6}), we have
 \begin{eqnarray*}
 &&0\le {1\over 2}{d\over dt}\|u(t)\|_{H^{-1}(\Omega)}^2 +\zeta(t) \|u(t)\|_{H^{-1}(\Omega)}^2 + |\langle f, (-\triangle)^{-1}u\rangle|\\
 &&\quad \le {1\over 2}{d\over dt}\|u(t)\|_{H^{-1}(\Omega)}^2 +\zeta(t) \|u(t)\|_{H^{-1}(\Omega)}^2 + \|f\|_{H^{-1}(\Omega)}\|u\|_{H^{-1}(\Omega)}\\
 &&\quad\le {1\over 2}{d\over dt}\|u(t)\|_{H^{-1}(\Omega)}^2 +\zeta(0)e^{CM^2t} \|u(t)\|_{H^{-1}(\Omega)}^2 + CM\|u\|_{2}\|u\|_{H^{-1}(\Omega)}\\
 &&\quad\le {1\over 2}{d\over dt}\|u(t)\|_{H^{-1}(\Omega)}^2 +\zeta(0)e^{CM^2t}  \|u(t)\|_{H^{-1}(\Omega)}^2 +CM\sqrt{\zeta(t)}\|u\|_{H^{-1}(\Omega)}^2\\
 &&\quad\le  {1\over 2}{d\over dt}\|u(t)\|_{H^{-1}(\Omega)}^2 +e^{CM^2T} \left(\zeta(0) + CM\sqrt{\zeta(0)}\right)\|u(t)\|_{H^{-1}(\Omega)}^2.
 \end{eqnarray*}
 Therefore,
 \begin{eqnarray*}
 \|u(0)\|_{H^{-1}(\Omega)}^2\le \exp(2e^{CM^2T} \left(\zeta(0) + CM\sqrt{\zeta(0)}\right)T)\|u(T)\|_{H^{-1}(\Omega)}^2.
 \end{eqnarray*}
This yields
 \begin{eqnarray*}
&& \frac{\|u(0)\|_{2}^2}{\|u(T)\|_{2}^2}\leq \frac{\|u(0)\|_{H^{-1}(\Omega)}^2}{\|u(T)\|_{H^{-1}(\Omega)}^2}\zeta(0)\nonumber\\
&\leq& \zeta(0)\exp(2e^{CM^2T} \left(\zeta(0) + CM\sqrt{\zeta(0)}\right)T)\nonumber\\
&\leq& \exp\big(C(1+MT)e^{CM^2T} \zeta(0)\big).
 \end{eqnarray*}
 This, together with (\ref{1.4}), deduces (\ref{1.5}).
 This completes the
proof.
\end{proof}

\section{A nontrivial example}
 In this section, we will give a nontrivial parabolic case. We consider the equation as follows:
\begin{equation}
\begin{cases}
\partial_t u-\triangle u+\sum_{i=1}^nb_i(x,t)\partial_iu+c(x,t)u=0,& \textrm{ in } \Omega\times(0,T),\\
u=0,&  \textrm{ on } \partial\Omega\times(0,T),\\
u(x,0)=u_0,
\end{cases}
\end{equation}
where  $u$ denote states $u(x, t)$  at spatial position
$x\in\Omega$ and time $t\geq0$, and
 the initial data $u_0(x)\in L^2(\Omega)$. \\

Now, we suppose that the  coefficients $b_i(x,t), c(x,t)$, $(i=1,2,\ldots,n)$ satisfy
\begin{eqnarray}\label{condition1}
b_i(x,t),c(x,t)\in L^{\infty}(\Omega\times(0,T)), (i=1,2,\ldots,n),
\end{eqnarray}
and
 \begin{eqnarray}\label{condition2}
M=\max\{\|b_i\|_{L^{\infty}(\Omega\times(0,T))},\|c\|_{L^{\infty}(\Omega\times(0,T))}\:|\:i=1,2\ldots,n\}.
\end{eqnarray}
Thus, the solutions of equation (\ref{1.1})  $u(x,t)\in L^2(0,T; H_0^1(\Omega))\cap H^1(0,T; H^{-1}(\Omega))$ as the
 initial data  $u_0\in L^2(\Omega)$. This is  Assumption 1 in Section 1.  By the standard energy estimate, we obtain Assumption 2 holds in this case.
In order to get Assumption 2, we will introduce the following Lemma.
\begin{lemma}\label{lemma4.a}
Suppose that $h\in L^\infty(\Omega)$, and $g\in H_0^1(\Omega)$. Then, we have
\begin{eqnarray*}
\|h\cdot \partial_ig\|_{H^{-1}(\Omega)}\leq C\|h\|_{L^\infty(\Omega)}\cdot\|g\|_{2},\: i=1,2,\ldots,n.
\end{eqnarray*}
\end{lemma}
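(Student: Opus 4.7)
The plan is a duality argument exploiting the vanishing boundary traces. Since $H^{-1}(\Omega)$ is the dual of $H_0^1(\Omega)$,
\[
\|h\partial_ig\|_{H^{-1}(\Omega)}=\sup_{\varphi\in H_0^1(\Omega),\,\|\varphi\|_{H_0^1(\Omega)}\le 1}\left|\int_\Omega h(\partial_ig)\varphi\,dx\right|,
\]
so it suffices to estimate the tested integral by $C\|h\|_{L^\infty(\Omega)}\|g\|_{L^2(\Omega)}\|\varphi\|_{H_0^1(\Omega)}$ uniformly in $\varphi$. The aim is to trade the derivative off $g$ onto $\varphi$ by integration by parts, using the fact that $g\in H_0^1(\Omega)$ vanishes on $\partial\Omega$; this is what allows the right-hand side to carry only $\|g\|_{L^2}$ rather than $\|g\|_{H_0^1}$.

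The central algebraic step is the product rule $(\partial_ig)\varphi=\partial_i(g\varphi)-g\,\partial_i\varphi$, valid in $W_0^{1,1}(\Omega)$ because $g,\varphi\in H_0^1(\Omega)$ implies $g\varphi\in W_0^{1,1}(\Omega)$. Multiplying by $h$ and integrating splits the quantity of interest as
\[
\int_\Omega h(\partial_ig)\varphi\,dx=\int_\Omega h\,\partial_i(g\varphi)\,dx-\int_\Omega hg\,\partial_i\varphi\,dx.
\]
The second term is immediately bounded by $\|h\|_{L^\infty(\Omega)}\|g\|_{L^2(\Omega)}\|\partial_i\varphi\|_{L^2(\Omega)}\le\|h\|_{L^\infty(\Omega)}\|g\|_{L^2(\Omega)}\|\varphi\|_{H_0^1(\Omega)}$ by Cauchy-Schwarz, which is already of the target form.

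The first integral requires care since $h$ is only $L^\infty$ and $\partial_i h$ exists only distributionally. My plan is to mollify $h_\epsilon=h*\rho_\epsilon$ with $\|h_\epsilon\|_{L^\infty(\Omega)}\le\|h\|_{L^\infty(\Omega)}$ and $h_\epsilon\to h$ pointwise a.e.\ and strongly in $L^p(\Omega)$ for $p<\infty$. In the smooth setting, classical integration by parts using $g\varphi\big|_{\partial\Omega}=0$ gives $\int_\Omega h_\epsilon\,\partial_i(g\varphi)\,dx=-\int_\Omega(\partial_ih_\epsilon)\,g\varphi\,dx$, and a second integration by parts—now exploiting that $g$ itself vanishes on $\partial\Omega$—transfers $\partial_i$ back onto $g$ or $\varphi$ so that only $h_\epsilon$ (without a derivative) appears, producing a bound uniform in $\epsilon$ of the form $\|h\|_{L^\infty(\Omega)}\|g\|_{L^2(\Omega)}\|\varphi\|_{H_0^1(\Omega)}$. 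Dominated convergence—with the dominating function $\|h\|_{L^\infty(\Omega)}|\partial_i(g\varphi)|\in L^1(\Omega)$—lets me pass to the limit $\epsilon\to 0$ and recover the estimate for $h$ itself; density of $C_c^\infty(\Omega)$ in $H_0^1(\Omega)$ then yields the full statement.

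The main obstacle is that the naive pointwise bound after the first integration by parts involves $\|\partial_ih_\epsilon\|_{L^\infty(\Omega)}$, which blows up as $\epsilon\to 0$. The second integration by parts is therefore essential: it pushes the singular derivative back onto the factor $g\varphi$ (both of whose factors vanish on the boundary), allowing only $\|h\|_{L^\infty(\Omega)}$ to survive in the uniform estimate. This bookkeeping relies critically on the homogeneous Dirichlet conditions satisfied by both $g$ and the test function $\varphi$.
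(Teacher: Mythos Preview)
Your argument has a genuine gap in the treatment of the first integral $\int_\Omega h\,\partial_i(g\varphi)\,dx$. After mollifying you write $\int_\Omega h_\epsilon\,\partial_i(g\varphi)\,dx=-\int_\Omega(\partial_ih_\epsilon)\,g\varphi\,dx$, and then appeal to a ``second integration by parts'' to remove $\partial_ih_\epsilon$. But moving $\partial_i$ off $h_\epsilon$ and back onto $g\varphi$ simply undoes the first step and returns $\int_\Omega h_\epsilon\,\partial_i(g\varphi)\,dx$; there is no intermediate expression in which only $\|h_\epsilon\|_{L^\infty}$ appears together with $\|g\|_{L^2}\|\varphi\|_{H_0^1}$. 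In fact your splitting is tautological for this purpose: from $(\partial_ig)\varphi=\partial_i(g\varphi)-g\,\partial_i\varphi$ one has $\int h\,\partial_i(g\varphi)=\int h(\partial_ig)\varphi+\int hg\,\partial_i\varphi$, so bounding the ``first term'' is exactly equivalent to bounding the original pairing.

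This is not a repairable technicality: the inequality as stated fails for general $h\in L^\infty(\Omega)$. On $\Omega=(0,2\pi)$ take $g_k(x)=\sin(kx)\in H_0^1(\Omega)$ and $h_k(x)=\cos(kx)$; then $h_kg_k'=\tfrac{k}{2}(1+\cos 2kx)$, whose constant part forces $\|h_kg_k'\|_{H^{-1}(\Omega)}\gtrsim k$, while $\|h_k\|_{L^\infty}\|g_k\|_{L^2}$ remains bounded. The paper's own proof takes a different route---it introduces a primitive $v\in L^\infty$ with $\partial_iv=h$ and claims $h\,\partial_ig=\partial_i(vg)-hg$---but that identity is itself in error (the product rule gives $\partial_i(vg)-hg=v\,\partial_ig$, not $h\,\partial_ig$), so the argument as written actually bounds $\|v\,\partial_ig\|_{H^{-1}}$ rather than $\|h\,\partial_ig\|_{H^{-1}}$. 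The estimate \emph{does} hold if $h$ is Lipschitz, via $h\,\partial_ig=\partial_i(hg)-(\partial_ih)g$ with both pieces in $L^2$, which is likely what the intended application requires; but neither your approach nor the paper's yields it for $h$ merely in $L^\infty$.
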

\begin{proof} We will prove for any fixed $i\in\{1,\,\,2,\,\,\cdots,n\}$. Clearly we can find a function $v\in L^\infty(\Omega)$ such that  $\partial_iv=h$, and $\|v\|_{L^\infty(\Omega)}\leq C\|h\|_{L^\infty(\Omega)}$.
Then, $h\cdot \partial_ig=\partial_i(v\cdot g)-h\cdot g$.
Thus,
\begin{eqnarray*}
\|h\cdot \partial_ig\|_{H^{-1}(\Omega)}&\leq& \|\partial_i(v\cdot g)\|_{H^{-1}(\Omega)}+\|h\cdot g\|_{H^{-1}(\Omega)}\nonumber \\
&\leq&  \| v\cdot g \|_{2}+\|h\cdot g\|_{2}\\
&\leq& C\|h\|_{L^\infty(\Omega)}\cdot\|g\|_{2}.
\end{eqnarray*}  This complete the proof.
\end{proof}
By this Lemma, Assumption 3 also holds in this case.

\section*{Acknowledgement}

Authors are grateful for referees' comments and guidances.


\begin{thebibliography}{lllp}
\bibitem{Almgren}
 F. J. Almgren Jr., Dirichlet's problem for multiple valued functions and the regularity of mass minimizing integral currents,
 in ``Minimal submanifolds and geodesics,"  1-6, North-Holland, Amsterdam-New York, 1979.

\bibitem{Camliyurt} G. Camliyurt and I. Kukavica, Quantitative unique continuation for a parabolic equation, {\it Indiana Univ. Math. J.}, 67 (2) (2018), 657-678.

\bibitem{DonnellyF} H. Donnelly and C. Fefferman, Nodal sets of eigenfunctions on Riemann manifolds, {\it Invent. Math.},  93 (1988), 161-183.

\bibitem{C. Escauriaza}
L. Escauriaza, Carleman inequalities and the heat operator, {\it Duke Math. J.}, 104(1)(2000), 113-127.

\bibitem{EscauriazaVega}
L. Escauriaza and L. Vega, Carleman inequalities and the heat operator II, {\it Indiana Univ. Math. J.}, 50(3)(2001), 1149-1169.

\bibitem{Sverak} L. Escauriaza, G. Seregin, and V. \u{S}ver\'{a}k, Backward uniqueness for parabolic equations, {\it Arch. Rational Mech. Anal.}, 169(2003), 147-157.

\bibitem{ESS} L. Escauriaza, G. Seregin, and V. \u{S}ver\'{a}k, $L^{3, \infty}$-Solutions to the Navier-Stokes Equations
and Backward Uniqueness, Russian Math. Surveys, 58 (2003), pp. 211-250

\bibitem{EscauriazaF2}
L. Escauriaza and F.J. Fern\'{a}ndez, Unique continuation for parabolic operators, {\it Ark. Mat.}, 41(2003), 35-60.

\bibitem{EscauriazaF1}
L. Escauriaza, F.J. Fern\'{a}ndez, and S. Vessella, Doubling properties of caloric functions, {\it Appl. Anal.}, 85(2006), 205-223.


\bibitem{Fernandez}
F.J. Fern\'{a}ndez, Unique continuation for parabolic operators II, {\it Comm. PDE}, 28 (9 \&10) (2003), 1597 - 1604.


\bibitem{Lin2}
 N. Garofalo and F. Lin,
  Monotonicity properties of variational integrals, $A_p$ weights and unique continuation. {\it Indiana Univ. Math. J.}, 35(1986), no. 2, 245-268.

\bibitem{Jerison} D. Jerison and C.E. Kenig, Unique continuation and absence of positive eigenvalues for Schorodinger operators, {\it Ann. of Math.}(2) , 121(1985), 463-488. 

\bibitem{C. Kenig}
C. Kenig, Quantitative unique continuation, logarithmic convexity of Gaussian means and Hardy's uncertainty principle,  {\it Proc. Sympos. Pure Math.}, 79(2008),  207-227.

\bibitem{Kukavica} I. Kukavica, Quantitative uniqueness for second-order elliptic operators, {\it Duke Math. J. }, 91 (2) (1998), 225 - 240. 

\bibitem{H. Koch}
H. Koch and D. Tataru, Carleman estimates and unique continuation for second order parabolic equations with nonsmooth coefficients,
{\it Comm. PDE}, 34(2009), no.4, 305-366.

\bibitem{Landis}
E.M. Landis and  O. A.  Oleinik,  Generalized analyticity and some related properties of solutions
of elliptic and parabolic equations, {\it Russian Mathematical Surveys}, 29(1974) 195-212.

\bibitem{F. Lin}
F. Lin, A uniqueness theorem for parabolic equations. {\it Comm. Pure Appl. Math.}, 43(1990), no. 1, 127-136.

\bibitem{Q. Lu} Q. L\"{u} and Z. Yin, Unique continuation for stochastic heat equations, {\it ESAIM: Control, Optimization and Calculus of Variations}, 21(2015),  378-398.


 \bibitem{Phung0}
 K.D. Phung, Note on the cost of the approximate controllability for the heat equation with potential, {\it J. Math. Anal. Appl.}, 295(2004), 527 - 538.

\bibitem{Phung1}
K.D. Phung and  G. Wang, Quantitative unique continuation for the semilinear heat equation in a convex domain.
{\it J. Funct. Anal.}, 259(2010), no. 5, 1230-1247.

\bibitem{Phung2}
 K.D. Phung and G. Wang, An observability estimate for parabolic equations from a measurable set in time and
its applications, {\it J. Eur. Math. Soc.}, 15(2013), no. 2, 681-703.

\bibitem{Phung3}
 K.D. Phung, L. Wang, and C. Zhang, Bang-bang property for time optimal control of semilinear heat equation,
{\it Ann. Inst. H. Poincar\'{e} Anal. Non Lin\'{e}aire},  31(2014), 477-499.

\bibitem{Poon}
 C. Poon, Unique continuation for parabolic equations, {\it Comm. PDE}, 21(1996), 521-539.

\bibitem{Seregin} G. Seregin and V. \u{S}ver\'{a}k, The Navier-Stokes equations and backward uniqueness,  Nonlinear Problems in Mathematical Physics II, In Honor
of Professor O.A. Ladyzhenskaya, International Mathematical Series II,
2002, pp. 359-370.

\bibitem{Sogge} C. Sogge, A unique continuation theorem for second order parabolic differential operators, {\it Ark. Mat.}, (28)1990, 159-182.

\end{thebibliography}
\end{document}